\newtheorem{theorem}{Theorem}[section]
\newtheorem{proposition}[theorem]{Proposition}
\newtheorem{corollary}[theorem]{Corollary}
\theoremstyle{definition}
\newtheorem{example}[theorem]{Example}
\newtheorem{remark}[theorem]{Remark}
\def\Z{{\mathbb Z}}
\def\Q{{\mathbb Q}}
\def\N{{\mathbb N}}
\newcommand{\im}{{\rm Im}}
\begin{document}

\author[A.R. Chekhlov]{Andrey R. Chekhlov}
\address{Department of Mathematics and Mechanics, Tomsk State University, 634050 Tomsk, Russia.}
\email{cheklov@math.tsu.ru; a.r.che@yandex.ru}
\author[P.V. Danchev]{Peter V. Danchev}
\address{Institute of Mathematics and Informatics, Bulgarian Academy of Sciences \\ "Acad. G. Bonchev" str., bl. 8, 1113 Sofia, Bulgaria.}
\email{danchev@math.bas.bg; pvdanchev@yahoo.com}
\author[P.W. Keef]{Patrick W. Keef}
\address{Department of Mathematics, Whitman College, 345 Boyer Avenue, Walla Walla, WA, 99362, United States of America.}
\email{keef@whitman.edu}

\title[Strongly co-Hopfian Abelian Groups]{Strongly and Uniformly Strongly \\ co-Hopfian Abelian Groups}
\keywords{co-Hopfian group, (uniformly) strongly co-Hopfian group, algebraically compact group, cotorsion group}
\subjclass[2010]{20K10, 20K20, 20K21, 20K30}

\maketitle

\begin{abstract} We consider the so-called {\it strongly co-Hopfian} and {\it uniformly strongly co-Hopfian} Abelian groups, significantly generalizing some important results due to Abdelalim in the J. Math. Analysis (2015). Specifically, we prove that any strongly co-Hopfian group is a direct sum of an sp-group and a divisible group, both of which are strongly co-Hopfian. We also show that a group whose maximal torsion subgroup and corresponding torsion-free factor are both strongly co-Hopfian will also be strongly co-Hopfian. We provide several examples demonstrating that the converse of this statement does {\it not} generally hold, thus illustrating that the structure of genuinely mixed strongly co-Hopfian groups is rather complicated and does {\it not} entirely depend on the structure of its maximal torsion subgroup. We also establish that a strongly co-Hopfian group is cotorsion exactly when it is algebraically compact and, particularly, a reduced (adjusted) cotorsion group is strongly co-Hopfian only when its maximal torsion subgroup is strongly co-Hopfian. Additionally, we demonstrate that a strongly co-Hopfian group is uniformly strongly co-Hopfian exactly when its maximal torsion subgroup is strongly co-Hopfian.
\end{abstract}

\section{Introduction and Motivation}

All groups considered and examined in the present research article are {\it additive} and {\it Abelian} as the basic notation and terminology are in agreement with the classical books \cite{F} and \cite{Fu}. The symbol $G$ will always denote some group, and $T$ will denote its {\it maximal torsion subgroup} or shortly just {\it torsion subgroup}, which is often termed as the {\it torsion part}, as well. If $H$ is some other group, we will typically denote its torsion subgroup by $T_H$ for clarity. Similarly, for a prime $p$, we will denote the $p$-torsion subgroup of $G$ by $T_p$, so that the equality $T=\bigoplus_p T_p$ is always fulfilled.

\medskip

It is well known in the existing literature that a group $G$ is said to be {\it co-Hopfian} if, and only if, it is {\it not} isomorphic to any of its proper subgroups. In a related vein, imitating \cite{A}, a group $G$ is called {\it strongly co-Hopfian} (abbreviated in the sequel as {\it Sco-H} for short) if the chain $$\im f\geq\im f^2\geq\dots \geq \im f^n \geq \dots$$ is stationary (i.e., it stabilizes at some place) for every endomorphism $f$ of $G$.

\medskip

Note that all Artinian modules satisfy this definition. So, the same holds for all groups having the minimality condition with respect to subgroups (i.e., such are the finite direct sums of co-cyclic groups).

Since the equality $\im f^n=\im f^{n+1}$ for some integer $n>0$, where $f$ is an endomorphism of $G$, is obviously equivalent to the condition $\im f^n+\ker f^n=G$ (cf. \cite[Proposition 2.1]{A}), each Sco-H group is necessarily co-Hopfian. 

Nevertheless, another way to see this important fact is to use an indirect argument like this: If $G$ fails to be co-Hopfian, then there must be an injective homomorphism $\phi:G\to G$ that is not surjective. If $x\in G\setminus \phi(G)$, then, for all $n\in \N$, it follows that $$\phi^n(x)\in \phi^n(G)\setminus \phi^{n+1}(G)$$ yielding $\phi^n(G)\ne \phi^{n+1}(G)$, and hence $G$ is not Sco-H, as suspected.

It is worth mention that in \cite{A} Sco-H groups were fully characterized in the classes of torsion groups, torsion-free groups and algebraic compact groups, plus other interesting properties of Sco-H groups were additionally considered. Concretely, it was shown there that strong co-Hopficity is closed under taking direct summands (see \cite[Proposition 2.2]{A}) as well as that reduced Sco-H $p$-groups are always finite (see \cite[Proposition 2.3]{A}). Furthermore, in \cite[Theorem 2.1]{A} reduced torsion Sco-H groups were classified from which it follows at once that subgroups of reduced Sco-H groups inherit this property, i.e., they are again reduced Sco-H, and in \cite[Theorem 2.2]{A} this classification was extended to an arbitrary torsion group. Next, in \cite[Proposition 3.1]{A}, the author showed that any divisible group is Sco-H precisely when all ranks are bounded by some fixed natural number, and in \cite[Proposition 3.2]{A} he proved that every torsion-free group is Sco-H uniquely when it is divisible of finite rank. A culmination of his research exploration is the description in \cite[Theorem 3.1]{A} of each reduced algebraically compact Sco-H groups in terms of their $p$-basic subgroups for all primes $p$, that fact will be substantially generalized in Theorem~\ref{cotorsion} to the so-called {\it cotorsion groups}.

\medskip

Mimicking \cite{KTT}, recall also that a mixed group $G$ with an infinite number of non-zero $p$-components $T_p$ is called an {\it sp-group}, provided that $G$ is a pure subgroup in the cartesian direct product $\prod_p T_p$. In that case, it is worthwhile remembering that as the torsion subgroup $T$ of $G$ is the corresponding direct sum $\bigoplus_p T_p$, the factor-group $G/T$ is always divisible. For our convenience, we will also employ this terminology when $T_p$ is only non-zero for a finite number of primes; in this case, $G=T=\bigoplus_p T_p$ is a finite direct sum and again $G/T$ is divisible, since it is, in fact, $\{0\}$.

\medskip

Explicitly, the main points which motivate our work are the following four bullets:

\medskip

\noindent $\bullet$ If $T$ and $G/T$ are both Sco-H groups, then $G$ is an Sco-H group (Theorem~\ref{char}).

\medskip

\noindent $\bullet$ The converse of the last result does {\it not} hold in general, because if $G$ is an Sco-H group, then we cannot conclude that either $T$ or $G/T$ is an Sco-H group (Examples~\ref{ex0}-\ref{ex3}).

\medskip

\noindent $\bullet$ If $G$ is a reduced adjusted cotorsion group, then $G$ is Sco-H if, and only if, $T$ is Sco-H. (Corollary~\ref{cotor}).

\medskip

\noindent $\bullet$ If $G$ is an Sco-H group, then $G$ is uniformly Sco-H if, and only if, $T$ is Sco-H (Theorem~\ref{strongly}).

\section{Principal Results}

Our first pivotal technicality is the following one.

\begin{proposition}\label{splitinvariant}
Suppose $G=A\oplus B$ is a group for which $\mathrm{Hom}(B, A)=\{0\}$. Then, $G$ is Sco-H if and only if $A$ and $B$ are both Sco-H.
\end{proposition}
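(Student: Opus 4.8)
The plan is to dispatch the forward implication instantly and then do the real work on the converse. If $G$ is Sco-H then, since $A$ and $B$ are direct summands of $G$, they are both Sco-H by \cite[Proposition~2.2]{A}. Conversely, assume $A$ and $B$ are Sco-H and fix $f\in\mathrm{End}(G)$; we must show that the chain $\im f\supseteq\im f^2\supseteq\cdots$ stabilizes.

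First I would use the hypothesis $\mathrm{Hom}(B,A)=\{0\}$ to observe that $f(B)\subseteq B$, so that relative to $G=A\oplus B$ the endomorphism $f$ is ``lower triangular.'' Writing $\pi_A,\pi_B$ for the projections, put $\alpha:=\pi_A f|_A\in\mathrm{End}(A)$, $\beta:=\pi_B f|_A\colon A\to B$ and $\delta:=f|_B\in\mathrm{End}(B)$; then $\pi_A f=\alpha\pi_A$, hence $\pi_A f^n=\alpha^n\pi_A$ and $f^n|_B=\delta^n$, and a brief induction gives $\pi_B f^n|_A=\sum_{i=0}^{n-1}\delta^i\beta\alpha^{\,n-1-i}=:\beta_n$. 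From this one reads off, for every $n\ge1$,
$$\pi_A(\im f^n)=\im\alpha^n,\qquad \im f^n\cap B=\beta_n(\ker\alpha^n)+\delta^n(B).$$
I would then record the elementary modular fact that for subgroups $C'\subseteq C$ of $G$ the equalities $C'\cap B=C\cap B$ and $\pi_A(C')=\pi_A(C)$ force $C'=C$. Applied along the descending chain $\{\im f^n\}$, this reduces the problem to showing that the two chains $\{\im\alpha^n\}_n$ and $\{\im f^n\cap B\}_n$ are both eventually constant. The first is, because $A$ is Sco-H.

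For the second chain I would first replace $f$ by a power $f^N$ — harmless, since $\{\im f^n\}$ is stationary iff $\{\im f^{Nn}\}$ is — choosing $N$ large enough that $\im\alpha^n=\im\alpha$ and $\im\delta^n=\im\delta$ for all $n\ge1$. Then $\delta^i(B)\subseteq\delta(B)$ for $i\ge1$, so all summands of $\beta_n$ with $i\ge1$ vanish modulo $\delta(B)$, giving $\beta_n(a)\equiv\beta\alpha^{\,n-1}(a)\pmod{\delta(B)}$ for $a\in\ker\alpha^n$; hence $\im f^n\cap B=\beta\bigl(\alpha^{\,n-1}(\ker\alpha^n)\bigr)+\delta(B)$. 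Using the identity $\alpha^{\,n-1}(\ker\alpha^n)=\im\alpha^{\,n-1}\cap\ker\alpha$ together with $\im\alpha^{\,n-1}=\im\alpha$ for $n\ge2$, this collapses to $\beta(\im\alpha\cap\ker\alpha)+\delta(B)$, independent of $n\ge2$. Thus $\im f^n\cap B$ — and therefore, by the modular fact, $\im f^n$ itself — is constant for $n\ge2$, so $G$ is Sco-H.

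The step I expect to be the crux is the analysis of $\im f^n\cap B=\beta_n(\ker\alpha^n)+\delta^n(B)$: a priori $\ker\alpha^n$ grows with $n$ and $\beta_n$ changes, so nothing forces stabilization from the Sco-H hypotheses on $A$ and $B$ taken naively. What rescues the argument is the preliminary passage to a power of $f$, which simultaneously freezes $\im\alpha^n$ and kills $\delta$ modulo $\delta(B)$, collapsing the troublesome group to the manifestly eventually-constant $\beta(\im\alpha\cap\ker\alpha)+\delta(B)$. The routine points to check carefully are that passing to $f^N$ costs nothing, that the displayed formulas for $\pi_A(\im f^n)$ and $\im f^n\cap B$ are correct, and that $\alpha^{\,n-1}(\ker\alpha^n)=\im\alpha^{\,n-1}\cap\ker\alpha$.
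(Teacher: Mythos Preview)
Your proof is correct, but it takes a markedly different route from the paper's. The paper observes that $\mathrm{Hom}(B,A)=\{0\}$ makes $B$ fully invariant in $G$, so any endomorphism $\phi$ restricts to $B$ and induces an endomorphism $\overline{\phi}$ of $G/B\cong A$. Choosing $n$ with $\phi^n(B)=\phi^{n+1}(B)$ and $m$ with $\phi^m(A)+B=\phi^{m+1}(A)+B$, a four-line chain of inclusions then yields $\phi^{n+m}(G)=\phi^{n+m+1}(G)$ directly; the off-diagonal piece $\beta$ never appears.

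Your argument instead writes $f$ as a lower-triangular matrix, computes $\pi_A(\im f^n)$ and $\im f^n\cap B$ explicitly, and then---after passing to a power of $f$ to freeze $\im\alpha^n$ and $\im\delta^n$---shows that the cross-term $\beta_n(\ker\alpha^n)$ collapses modulo $\delta(B)$ to the constant $\beta(\im\alpha\cap\ker\alpha)$. The verifications you flag (that replacing $f$ by $f^N$ is harmless, the formula for $\im f^n\cap B$, and the identity $\alpha^{n-1}(\ker\alpha^n)=\im\alpha^{n-1}\cap\ker\alpha$) all check out. What you gain is an explicit description of the stabilized image; what the paper gains is brevity, by working with $B$ and the quotient $G/B$ and never having to unpack $\beta$ or pass to a power of $f$ at all.
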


\begin{proof}
Necessity follows at once from the general fact that an arbitrary direct summand of an Sco-H group retains that property (see \cite[Proposition 2.2]{A}).

Regarding sufficiency, suppose $\phi:G\to G$ is an endomorphism. Clearly, $B$ is fully invariant in $G$.
Therefore, since $B$ is Sco-H, for some $n\in \N$ we have $\phi^n(B)=\phi^{n+1}(B)$. In addition, $\phi$ induces an endomorphism $\overline \phi:G/B\to G/B$, and since $G/B\cong A$ is also assumed to be Sco-H, it follows that for some $m\in \N$ that $\overline \phi^{\,m}(G/B)=\overline \phi^{{\,m}+1}(G/B)$; i.e., $\phi^m(A)+B=\phi^{m+1}(A)+B$.  Consequently,
\begin{align*}
\phi^{n+m}(G) &= \phi^n \phi^m(A+B)\cr
              &\subseteq \phi^n(\phi^m(A) +B)\cr
              &=     \phi^n (\phi^{m+1}(A) +B)\cr
              &=\phi^{n+m+1}(A) + \phi^n (B) \cr
              &= \phi^{n+m+1}(A)+ \phi^{n+m+1}(B)\cr
              &= \phi^{n+m+1}(G)\cr
              &\subseteq \phi^{n+m}(G)\cr
\end{align*}
Thus, $\phi^{n+m}(G)=\phi^{n+m+1}(G)$, completing the argument.
\end{proof}

A useful consequence is the following.

\begin{corollary}\label{reduction}
Suppose $G=R\oplus D$, where $R$ is reduced and $D$ is divisible. Then, $G$ is Sco-H if and only if $R$ is Sco-H and there is an $n_0\in \N$ such that $r_0(D)\leq n_0$ and $r_p(D)\leq n_0$ for all primes $p$.
\end{corollary}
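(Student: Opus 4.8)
The plan is to derive this as a direct consequence of Proposition~\ref{splitinvariant}, combined with the description of divisible Sco-H groups recalled in the introduction (namely \cite[Proposition 3.1]{A}).

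First I would check that $\mathrm{Hom}(D,R)=\{0\}$. This is immediate: any homomorphic image of the divisible group $D$ is divisible, whereas the only divisible subgroup of the reduced group $R$ is $\{0\}$, so every homomorphism $D\to R$ vanishes. Hence, writing $G=R\oplus D$ and applying Proposition~\ref{splitinvariant} with $A=R$ and $B=D$, we conclude that $G$ is Sco-H if and only if both $R$ and $D$ are Sco-H.

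It then only remains to translate the condition ``$D$ is Sco-H'' into the stated numerical one. For this I would invoke \cite[Proposition 3.1]{A}, which says precisely that a divisible group is Sco-H exactly when all of its ranks — the torsion-free rank $r_0$ and the $p$-ranks $r_p$ for every prime $p$ — are bounded above by a single natural number $n_0$. Putting this together with the previous paragraph gives the asserted equivalence.

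I do not expect a genuine obstacle here, since the statement is just an amalgamation of two results already at our disposal; the only point deserving a moment's attention is the verification that $\mathrm{Hom}(D,R)=\{0\}$, which is what licenses the use of Proposition~\ref{splitinvariant} in the first place. Once that is noted, the argument is essentially immediate.
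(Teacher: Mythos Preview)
Your proof is correct and matches the paper's own argument essentially line for line: apply Proposition~\ref{splitinvariant} with $A=R$, $B=D$ using $\mathrm{Hom}(D,R)=\{0\}$, then invoke \cite[Proposition~3.1]{A} for the rank condition on $D$. Your added justification of $\mathrm{Hom}(D,R)=\{0\}$ is a nice touch that the paper leaves implicit.
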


\begin{proof}
Since $\mathrm{Hom}(D, R)=\{0\}$ holds, Proposition~\ref{splitinvariant} tells us that $G$ is Sco-H if, and only if, both $R$ and $D$ have that property. And utilizing \cite[Proposition~3.1]{A}, $D$ is Sco-H if, and only if, such an $n_0$ exists.
\end{proof}

So, by Corollary~\ref{reduction}, a complete description of the Sco-H groups quickly reduces to the case of reduced groups. The following helpful observation further restricts our attention to sp-groups with finite $p$-torsion components.

\begin{proposition}\label{embedding}
Suppose $G$ is a reduced Sco-H group with torsion $T=\bigoplus_p T_p$. Then, each $T_p$ is finite and $G\cong \hat G$, where $T\leq \hat G \leq \prod_p T_p$ and $\hat G/T$ is (torsion-free) divisible. In other words, $G$ is an sp-group, provided the number of its non-zero $p$-components is infinite.
\end{proposition}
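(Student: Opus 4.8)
The plan is to dispose of the two assertions—every $T_p$ finite, and the embedding of $G$ into $\prod_p T_p$—after first recording a fact valid for \emph{every} Sco-H group: the quotient $G/T$ is divisible. Indeed, for $n\in\N$ let $f$ be multiplication by $n$ on $G$; then $\im f^k=n^kG$, so strong co-Hopficity yields $n^NG=n^{N+1}G$ for some $N$, whence $n^N(G/T)=n^{N+1}(G/T)$, and since multiplication by $n^N$ is injective on the torsion-free group $G/T$ we get $G/T\cong n^N(G/T)$, which is $n$-divisible; as $n$ varies, $G/T$ is divisible.

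Next I would show that each $T_p$ is finite. Apply the observation above with $n=p$ and fix $N$ with $p^NG=p^{N+1}G$. Since $T$ is pure in $G$ (because $G/T$ is torsion-free), one has $\mathrm{tor}(p^nG)=p^nT$, and $p^nG/p^nT$ is isomorphic to $p^n(G/T)=G/T$, which is torsion-free and $p$-divisible; reducing $0\to p^nT\to p^nG\to p^nG/p^nT\to 0$ modulo $p$ then gives $p^nG/p^{n+1}G\cong p^nT/p^{n+1}T=p^nT_p/p^{n+1}T_p$. Hence $p^NT_p=p^{N+1}T_p$, and as $T_p$ is reduced this forces $p^NT_p=0$, so $T_p$ is bounded. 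A bounded pure subgroup is a direct summand, so $T_p$ is a summand of $G$ and therefore Sco-H by \cite[Proposition~2.2]{A}; a reduced Sco-H $p$-group is finite by \cite[Proposition~2.3]{A}.

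For the embedding, each $T_p$ is now finite, hence algebraically compact, hence pure-injective; as $T$ is pure in $G$, the coordinate projection $\rho_p\colon T\to T_p$ extends to $\pi_p\colon G\to T_p$, uniquely since $\mathrm{Hom}(G/T,T_p)=0$. Collecting these yields $\Pi\colon G\to\prod_pT_p$ with $\Pi|_T$ the inclusion $\bigoplus_pT_p\hookrightarrow\prod_pT_p$, and the crux is that $K:=\ker\Pi=0$. Now $K\cap T=0$, so $K$ is torsion-free, and $K$ is reduced as a subgroup of $G$; I claim $K$ is divisible, which then finishes the matter. Fix a prime $p$. Since $\pi_p$ kills $\bigoplus_{q\neq p}T_q$, it factors through $G^{(p)}:=G/\bigoplus_{q\neq p}T_q$, whose torsion subgroup is $T_p$ and whose torsion-free quotient is $G/T$; the induced $\overline\pi_p\colon G^{(p)}\to T_p$ restricts to the identity on $T_p$, so $G^{(p)}=T_p\oplus C^{(p)}$ with $C^{(p)}\cong G/T$ divisible. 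For $a\in K$, its image in $G^{(p)}$ lies in $C^{(p)}=pC^{(p)}\subseteq pG^{(p)}$, so $a\in pG+\bigoplus_{q\neq p}T_q$; writing $a=pg+t'$ with $t'$ of order prime to $p$ (so $t'\in p\langle t'\rangle\subseteq pG$) gives $a\in pG$. Say $a=pg$; then $p\,\Pi(g)=\Pi(a)=0$, so $\Pi(g)$ lies in the torsion subgroup $\bigoplus_pT_p$ of $\prod_pT_p$, i.e. $\Pi(g)=\Pi(t)$ with $t:=\Pi(g)\in T$, and then $g-t\in K$ with $p(g-t)=a$ because $pt=p\,\Pi(g)=0$; hence $a\in pK$. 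Thus $K=pK$ for every $p$, so $K$ is divisible and therefore $K=0$. Setting $\hat G:=\Pi(G)$ we obtain $T\le\hat G\le\prod_pT_p$ with $\hat G/T\cong G/T$ divisible, so $G\cong\hat G$ is an sp-group whenever infinitely many $T_p$ are non-zero.

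I expect the injectivity of $\Pi$ (equivalently, $\ker\Pi=0$) to be the genuine obstacle: it is the one place where reducedness of $G$ is really used, and it relies on chaining together the divisibility of $G/T$ (a consequence of strong co-Hopficity), the splitting $G^{(p)}=T_p\oplus C^{(p)}$, and the triviality that elements of order prime to $p$ are $p$-divisible.
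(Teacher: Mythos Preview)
Your proof is correct and complete, but it follows a genuinely different route from the paper's. The paper works prime-by-prime: from $p^nG=p^{n+1}G$ it gets $T_p$ bounded (hence a summand), argues directly that an infinite bounded $p$-group fails co-Hopficity, and then observes that multiplication by $p$ on the complement $H_p$ must be an automorphism, forcing $H_p=p^\omega G$; this makes $T$ dense in the $\Z$-adic topology, so the map to $\prod_p T_p$ is the completion map with kernel the first Ulm subgroup $G^1$, which is then shown to be divisible. You instead front-load the divisibility of $G/T$ as a general lemma, deduce boundedness of $T_p$ via the snake-lemma isomorphism $p^nG/p^{n+1}G\cong p^nT_p/p^{n+1}T_p$, invoke \cite[Proposition~2.3]{A} for finiteness, and then build $\Pi$ from pure-injectivity of the finite $T_p$; your injectivity argument via the quotients $G^{(p)}$ and their splittings is an elegant substitute for the paper's Ulm-subgroup computation. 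The paper's approach yields the extra structural identifications $H_p=p^\omega G$ and $\ker\Pi=G^1$, while yours is more modular and avoids the co-Hopfian side-arguments entirely.

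One small slip to correct: when you write that $p\,\Pi(g)=0$ implies $\Pi(g)$ lies in ``the torsion subgroup $\bigoplus_p T_p$ of $\prod_p T_p$'', that phrase is inaccurate in general (the torsion of the product is larger than the direct sum). What you actually need---and what holds---is that an element of $\prod_q T_q$ annihilated by $p$ has all its $q$-components zero for $q\ne p$, so $\Pi(g)\in T_p\subseteq T$. The rest of the paragraph then goes through verbatim.
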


\begin{proof}
Fix some prime $p$. Considering multiplication by $p$ as an endomorphism of $G$, it follows that there is an $n\in \N$ such that $p^n G=p^{n+1}G$. Since $T_p$ is pure in $G$, this implies that $p^n T_p=p^{n+1}T_p$. Therefore, $p^n T_p$ must be $p$-divisible, and since it is a $p$-group, it is also $q$-divisible for all primes $q\ne p$. Hence, $p^n T_p$ is divisible, whence $\{0\}$, since $G$ is reduced.

Since $p^n T_p=\{0\}$, and $T_p$ is pure in $G$, it follows that $G=T_p\oplus H_p$ for some $H_p\leq G$. If $T_p$ is infinite, then since it is bounded, it would have a direct summand of the form $\Z(p^j)^{(\omega)}=:X$. However, such a group $X$ is, evidently, not co-Hopfian, so that $G$ would also fail to be co-Hopfian, which contradicts the assumption that it is Sco-H. So, each subgroup $T_p$ is finite indeed.

Apparently, $H_p$ has no $p$-torsion, so that multiplication by $p$ is an injection on $H_p$. If it failed to be a surjection, we could conclude that $H_p$ is not co-Hopfian, so that $G$ is also not co-Hopfian, which contradicts that $G$ is Sco-H. Thus, multiplication by $p$ is an automorphism of $H_p$, which forces that $H_p = p^{\omega}G$. Therefore, $T_p$ is dense in $G$ in the $p$-adic topology. So, we can conclude that $T$ is dense in $G$ in the $p$-adic topology for all primes $p$, i.e., it is dense in $G$ in the $\Z$-adic topology. In particular, $G/T$ must be divisible.

Note, furthermore, that $\prod_p T_p$ will be the completion of $T$ in the $\Z$-adic topology, so there is a homomorphism $G\to \prod_p T_p$ extending $T\hookrightarrow \prod_p T_p$, whose kernel is the first Ulm subgroup, $G^1=\cap_{n\geq 1} ~ nG = \cap_{\forall p} ~ p^{\omega}G = \cap_{\forall p}~(\cap_{m\geq 1} ~ p^mG)$, say. We now claim that $G^1$ must be divisible, and since $G$ is reduced, this means that $G^1=\{0\}$, enabling the desired embedding. To this end, suppose $x\in G^1$ and $p$ is an arbitrary prime. As $x\in G^1\subseteq p^\omega G=H_p$, it follows that $x\in H_p$. In the above decomposition, $G=T_p\oplus H_p$, and because multiplication by $p$ is an automorphism of $H_p$, there is a $y\in H_p$ such that $py=x$. Notice that, since $y\in H_p$, we have that $y\in p^\omega G$ and, for all $q\ne p$, since $(p,q)=1$, $x$ and $y$ will have the same (infinite) $q$-height. Finally, $y\in G^1$, so that $G^1$ is really divisible, and hence $\{0\}$, as claimed, finishing the argumentation.
\end{proof}

However, in contrast to Example~\ref{ex1} alluded to below, the next statement, which is a direct consequence of the previous Proposition~\ref{embedding}, shows the curious fact that the $p$-component of an arbitrary Sco-H group is again Sco-H for any prime $p$: In fact, each $p$-basic subgroup of an Sco-H group is a $p$-group as well as each $p$-component of an Sco-H group is its direct summand, and thus too an Sco-H group.

So, we thereby come to the following satisfactory observation concerning the mixed case: In virtue of the preceding Corollary~\ref{reduction}, and because the divisible Sco-H-groups were described in \cite[Proposition 3.1]{A}, the whole Sco-H group structure depends on its reduced part, especially when it is Sco-H, so that the study of Sco-H groups generally limits to reduced Sco-H groups.

Besides, one hopefully observes that, if an Sco-H group $G$ has only a finite number of non-zero $p$-components, then we may write $$G=G_{p_1}\oplus\dots\oplus G_{p_n}\oplus H,$$ where, if $H\neq\{0\}$, then $H$ is a non-trivial divisible torsion-free group of finite rank, and so the structure of such Sco-H groups $G$ is totally determined by usage of \cite{A}. Therefore, in most of our considerations, we will hereafter consider {\bf only} mixed Sco-H groups having an infinite number of non-zero $p$-components.

\medskip

Next, recall the following two key results from \cite{A}.

\medskip

(A) (Theorem 2.1) If $T$ is a reduced torsion group, then $T$ is Sco-H if, and only if, there is an $e\in \N$ such that $\mathrm{ord}(T_p)\leq p^e$ for all primes $p$. In particular, an arbitrary subgroup of a reduced torsion Sco-H group remains Sco-H.

\medskip

(B) (Proposition 3.2) If $A$ is a torsion-free group, then $A$ is Sco-H if, and only if, it is divisible of finite rank.

\medskip

These facts bring us to our most important result saying the following.

\begin{theorem}\label{char} Suppose $G$ is an arbitrary group. If both $T$ and $G/T$ are Sco-H groups, then so is $G$ as well.
\end{theorem}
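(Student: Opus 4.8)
The plan is to observe that $T$, being the torsion subgroup of $G$, is a \emph{fully invariant} (indeed characteristic) subgroup of $G$: every endomorphism of $G$ maps torsion elements to torsion elements. Thus the situation is formally identical to the sufficiency half of Proposition~\ref{splitinvariant}, where the only features of the subgroup $B$ that were actually exploited are that $B$ is fully invariant in $G$, that $B$ is Sco-H, and that the induced quotient $G/B$ is Sco-H; the direct-sum decomposition $G=A\oplus B$ there served merely to identify $G/B$ with the complementary summand $A$. So I would simply run the same argument with $B=T$ and quotient $G/T$, both of which are Sco-H by hypothesis (for $G/T$ this is exactly fact (B), which in addition tells us $G/T$ is divisible of finite rank, though that extra information is not needed for this proof).

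Concretely, let $\phi\colon G\to G$ be an endomorphism. Since $T$ is fully invariant and Sco-H, there is an $n\in\N$ with $\phi^{n}(T)=\phi^{n+1}(T)$, and hence $\phi^{n}(T)=\phi^{n+k}(T)$ for every $k\ge 0$. Passing to the quotient, $\phi$ induces $\overline\phi\colon G/T\to G/T$; because $G/T$ is Sco-H there is an $m\in\N$ with $\overline\phi^{\,m}(G/T)=\overline\phi^{\,m+1}(G/T)$, i.e.\ $\phi^{m}(G)+T=\phi^{m+1}(G)+T$. Applying $\phi^{n}$ to the inclusion $\phi^{m}(G)\subseteq\phi^{m+1}(G)+T$ yields $\phi^{n+m}(G)\subseteq\phi^{n+m+1}(G)+\phi^{n}(T)=\phi^{n+m+1}(G)+\phi^{n+m+1}(T)\subseteq\phi^{n+m+1}(G)\subseteq\phi^{n+m}(G)$, so $\phi^{n+m}(G)=\phi^{n+m+1}(G)$ and $G$ is Sco-H.

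There is essentially no hard step: the entire content was already packaged into Proposition~\ref{splitinvariant}, and one only needs to notice that its proof never required a genuine direct-sum splitting, merely full invariance of the sub-object. As a sanity check one can instead reduce literally to Proposition~\ref{splitinvariant}: since $G/T$ is a torsion-free Sco-H group, fact (B) forces it to be divisible of finite rank, hence injective, so $0\to T\to G\to G/T\to 0$ splits and $G=T\oplus D$ with $D\cong G/T$; as $\mathrm{Hom}(T,D)=\{0\}$ (a torsion group admits no nonzero homomorphism into a torsion-free group), Proposition~\ref{splitinvariant} applies directly with $A=D$ and $B=T$. The only subtlety, if one wants to call it that, is purely bookkeeping — taking the stabilization index for $T$ large enough that $\phi^{n}(T)=\phi^{n+m+1}(T)$, which is automatic once $\phi^{n}(T)=\phi^{n+1}(T)$.
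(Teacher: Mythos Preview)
Your main argument is correct and is genuinely simpler than the paper's. The paper first reduces to the case where $G$ is reduced, then inducts on $n=\dim_{\Q}(G/T)$: for a given $\phi$ it sets $H=\phi(G)$ and splits into the cases $\dim(\overline H)<n$ (handled by the inductive hypothesis applied to $H$) and $\dim(\overline H)=n$ (where $\overline\phi$ is an automorphism, so $T\cap\phi^j(G)=\phi^j(T)$, and a short-exact-sequence diagram finishes). Your observation that the proof of Proposition~\ref{splitinvariant} only uses full invariance of $B$, not the direct-sum splitting, bypasses all of this. In fact your computation establishes the more general fact: if $N\le G$ is fully invariant and both $N$ and $G/N$ are Sco-H, then $G$ is Sco-H. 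The paper's route, by contrast, leans on the specific structure supplied by fact~(B).

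Your ``sanity check'' alternative, however, is incorrect. Divisibility (injectivity) of the \emph{quotient} $G/T$ does not force the extension $0\to T\to G\to G/T\to 0$ to split; splitting would follow from injectivity of $T$ or projectivity of $G/T$, and neither is available here. Concretely, take $T=\bigoplus_p\Z(p)$ (Sco-H by fact~(A)) and let $G$ be a pure subgroup of $\prod_p\Z(p)$ containing $T$ with $G/T\cong\Q$; then $G$ is reduced, so a decomposition $G=T\oplus D$ with $D\cong\Q$ is impossible. (Equivalently, $T=\bigoplus_p\Z(p)$ is not cotorsion, so $\mathrm{Ext}(\Q,T)\ne 0$.) Fortunately you do not need this reduction: your first argument already does the job.
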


\begin{proof}
Suppose $G=R\oplus D$, where $R$ is reduced and $D$ is divisible. It is straightforward to verify that the result holds if, and only if, it holds for $R$ and $D$ individually. In addition, using \cite[Proposition~3.1]{A}, it is easy to check that the result holds for $D$. Therefore, there is no loss of generality in assuming that $G=R$ is reduced.

By point (B) above, we know that $\overline G:=G/T$ will be a torsion-free finite rank divisible group, and thus
we will induct on its rank, i.e., $n:=\mathrm{dim}(\overline G)$, its dimension as a vector space over $\Q$.

If, for a moment, $n=0$, then it follows that $G/T=\overline{\{0\}}$, so that $G=T$, which is assumed to be Sco-H. So, assume the result is true whenever $H$ satisfies our hypotheses and $\overline H:=H/T_H$ has dimension strictly less than $n=\dim (\overline G)>0$.

Suppose $\phi:G\to G$ is an endomorphism. So, $\phi$ induces an endomorphism $\overline \phi:\overline G\to \overline G$. Set $H:=\phi(G)$. Note that $T_H=T\cap H\leq T$. By point (A) above, $T_H$ will also be Sco-H. In addition,
$$\overline H=H/(T\cap H)\cong (H+T)/T =\overline \phi(\overline G)\leq \overline G$$
is divisible of finite rank with $\dim (\overline H)\leq n$.

\medskip

We now differ two basic cases:

\medskip

\noindent{\bf Case 1.} $\dim (\overline H)< n$: So, by induction, $H$ is also Sco-H. It is routine to see that $\phi$ restricts to an endomorphism of $H=\phi(G)$. Thus, there is a $k\in \N$ such that $\phi^k(H)=\phi^{k+1}(H)$. And if $j=k+1$, it follows that $\phi^j(G)=\phi^{j+1}(G)$, as requested.

\medskip

\noindent{\bf Case 2.} $\dim (\overline H)=n$: It follows that $\overline \phi(\overline G)=\overline G$. And since $\overline G$ is finite dimensional (with dimension exactly $n$), we can conclude that $\overline \phi:\overline G\to \overline G$ is an automorphism. Therefore, for all $j\in \N$, $\overline \phi^j$ will also be an automorphism of $\overline G$. This means that, if $j\in \N$, $x\in G$ and $\phi^j(x)\in T$, then $x\in T$. Thereby, we have shown that, for all $j\in \N$, the equality $T\cap \phi^j (G)=\phi^j(T)$ is valid.

However, since $T$ is Sco-H, there is a $k\in \N$ such that
$$       T\cap \phi^k (G)=\phi^k(T)=\phi^{k+1}(T)= T\cap \phi^{k+1} (G).
$$

And since
$$(\phi^k (G)+T)/T=\phi^{k} (\overline G)=\overline G= \phi^{k+1} (\overline G)=(\phi^{k+1} (G)+T)/T,$$ there is a commutative diagram with short exact rows:

\medskip

$$\begin{CD}
0 @>>> T\cap \phi^{k+1} (G) @>>> \phi^{k+1} (G) @>>>  \overline G @>>> 0 \\
    @.     @|                   @VV\subseteq V               @| \\
0 @>>> T\cap \phi^k (G)@>>> \phi^{k} (G) @>>> \overline G @>>> 0 \\
\end{CD}$$

\medskip

Finally, $\phi^{k} (G)=\phi^{k+1} (G)$, as requested, completing the proof.
\end{proof}

The major theme of the rest of our research is an investigation of when the converse of the last result will hold. In this aspect, the following immediate consequence of either Theorem~\ref{char} or Proposition~\ref{splitinvariant} gives an affirmative answer to this in the case of \textit{splitting} mixed groups.

\begin{corollary}
Suppose $G=A\oplus T$ is a splitting mixed group. Then, $G$ is Sco-H if and only if both $T$ and $G/T \cong A$ are Sco-H.
\end{corollary}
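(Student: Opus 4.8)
The plan is to obtain this as an essentially immediate consequence of Proposition~\ref{splitinvariant}. In a splitting mixed group $G = A \oplus T$, the complement $A$ is forced to be torsion-free: indeed $A \cong G/T$, and $G/T$ is torsion-free precisely because $T$ is, by definition, the \emph{full} torsion subgroup of $G$. Consequently, any homomorphism from the torsion group $T$ into the torsion-free group $A$ must send each element (which has finite order) to an element of finite order, hence to $0$; that is, $\mathrm{Hom}(T, A) = \{0\}$.

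With this in hand, I would simply apply Proposition~\ref{splitinvariant} to the decomposition $G = A \oplus T$, playing the role of ``$B$'' with $T$. Since $\mathrm{Hom}(T, A) = \{0\}$, the Proposition gives at once that $G$ is Sco-H if and only if both $A$ and $T$ are Sco-H. Recalling that $A \cong G/T$, this is exactly the claimed equivalence.

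It is worth noting in the write-up that the statement also follows from the two preceding results in a slightly different packaging: the forward implication ``$T$ and $G/T$ Sco-H $\Rightarrow$ $G$ Sco-H'' is precisely Theorem~\ref{char} (which needs no splitting hypothesis at all), while the reverse implication uses only that $A$ and $T$ are direct summands of $G$ together with the fact that direct summands of Sco-H groups remain Sco-H (\cite[Proposition~2.2]{A}).

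There is no genuine obstacle here: the sole point requiring verification is the vanishing of $\mathrm{Hom}(T, A)$, which is the routine observation that homomorphisms carry torsion into torsion. The corollary is recorded mainly to make explicit that, for \emph{splitting} mixed groups, the structural question reduces cleanly to the torsion subgroup and the torsion-free factor — in deliberate contrast to the genuinely mixed examples constructed later, where this reduction fails.
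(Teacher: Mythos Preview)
Your proposal is correct and matches the paper's own treatment: the paper records this corollary as an immediate consequence of either Theorem~\ref{char} or Proposition~\ref{splitinvariant}, and you have spelled out precisely the missing verification that $\mathrm{Hom}(T,A)=\{0\}$ needed to invoke the latter. Your additional remark that the two directions can be obtained separately from Theorem~\ref{char} and \cite[Proposition~2.2]{A} is also exactly in line with the paper's parenthetical comment.
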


The following necessary and sufficient condition is a slight restatement of \cite[Theorem~3.1]{A}.

\begin{theorem}\label{3.1}
The reduced algebraically compact group $G$ is Sco-H if and only if $T=\oplus_p T_p$ is Sco-H and $G\cong\prod_p T_p$.
\end{theorem}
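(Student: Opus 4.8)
The plan is to prove the two implications separately, the main inputs being Proposition~\ref{embedding}, point (A), and the pure-injectivity of algebraically compact groups. It is worth noting that Theorem~\ref{char} is of no use for the "if" direction: if $T=\bigoplus_p T_p$ is Sco-H then by point (A) each $T_p$ is finite, but the torsion-free divisible quotient $\big(\prod_p T_p\big)/T$ has infinite rank, hence is not Sco-H, so Theorem~\ref{char} does not apply and the product structure must be exploited directly.

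For sufficiency, assume $T=\bigoplus_p T_p$ is Sco-H and $G\cong M:=\prod_p T_p$; since being Sco-H is an isomorphism invariant we may take $G=M$. By point (A) there is an $e\in\N$ with $\mathrm{ord}(T_p)\le p^e$ for every prime $p$; in particular each $T_p$ is finite, and their composition lengths are uniformly bounded (recall that subgroups of reduced torsion Sco-H groups are again Sco-H, so this bound cannot blow up). The key observation is that $T_p$ is precisely the $p$-torsion subgroup of $M$: if $(x_q)_q\in M$ is killed by a power of $p$, then for $q\ne p$ multiplication by $p$ is an automorphism of the finite $q$-group $T_q$, forcing $x_q=0$. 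Hence every $T_p$ is fully invariant in $M$, so an arbitrary endomorphism $\phi$ of $M$ decomposes as $\phi=\prod_p\phi_p$ with $\phi_p:=\phi|_{T_p}$, and consequently $\mathrm{Im}\,\phi^k=\prod_p\mathrm{Im}\,\phi_p^k$ for all $k$. For each $p$ the descending chain $\mathrm{Im}\,\phi_p\ge\mathrm{Im}\,\phi_p^2\ge\cdots$ lives inside a group whose composition length is at most $e$, so it stabilizes by step $e$; since $e$ is independent of $p$, we get $\mathrm{Im}\,\phi^e=\mathrm{Im}\,\phi^{e+1}$, so $G$ is Sco-H.

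For necessity, assume $G$ is reduced, algebraically compact and Sco-H. By Proposition~\ref{embedding} each $T_p$ is finite and, after identifying $G$ with $\hat G$, we may assume $T\le G\le M:=\prod_p T_p$ with $G/T$ torsion-free divisible. One checks that $M/T$ is torsion-free divisible as well: divisibility is routine, and if $n(x_q)_q\in T$ then $x_q=0$ for all but finitely many $q$, since multiplication by $n$ is injective on $T_q$ whenever $q\nmid n$. This makes $G$ pure in $M$: if $x\in G$ and $x=ny$ with $y\in M$, divisibility of $G/T$ gives $z\in G$ with $x-nz\in T$, and torsion-freeness of $M/T$ forces $y-z\in T\le G$, so $y\in G$ and $x\in nG$. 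Now $G$ is algebraically compact, hence pure-injective, so the pure embedding $G\hookrightarrow M$ splits: $M=G\oplus M'$ with $M'\cong M/G\cong(M/T)/(G/T)$, a quotient of the $\Q$-vector space $M/T$ and therefore torsion-free divisible. But $M=\prod_p T_p$ is reduced, so $M'=\{0\}$ and $G=M=\prod_p T_p$.

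It remains to see that $T=\bigoplus_p T_p$ is Sco-H. Given an endomorphism $\psi$ of $T$, each $T_p$ is fully invariant in $T$, so $\psi=\bigoplus_p\psi_p$; the product $\Psi:=\prod_p\psi_p$ is an endomorphism of $G=M$, and since $G$ is Sco-H there is an $n$ with $\prod_p\mathrm{Im}\,\psi_p^n=\mathrm{Im}\,\Psi^n=\mathrm{Im}\,\Psi^{n+1}=\prod_p\mathrm{Im}\,\psi_p^{n+1}$; comparing coordinates yields $\mathrm{Im}\,\psi_p^n=\mathrm{Im}\,\psi_p^{n+1}$ for every $p$, hence $\mathrm{Im}\,\psi^n=\bigoplus_p\mathrm{Im}\,\psi_p^n=\mathrm{Im}\,\psi^{n+1}$. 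The step I expect to be the real obstacle is the passage from "$G$ is a pure subgroup of $\prod_p T_p$" to "$G=\prod_p T_p$": this is exactly where algebraic compactness is indispensable (through pure-injectivity, equivalently $\Z$-adic completeness), and once it is dropped the equality genuinely fails — already the pure-closed rank-one extensions of $T$ inside $\prod_p T_p$ are reduced and Sco-H by Theorem~\ref{char} yet far from equal to the full product.
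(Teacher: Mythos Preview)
The paper does not supply its own proof of this statement: it is presented as ``a slight restatement of \cite[Theorem~3.1]{A}'' with no accompanying argument, so there is nothing to compare line by line. Your proof is essentially correct and self-contained.

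One expository point deserves tightening. In the sufficiency direction you write that each $T_p$ is fully invariant in $M=\prod_p T_p$ and conclude that any endomorphism $\phi$ of $M$ equals $\prod_p\phi_p$. Full invariance of the $T_p$ alone does not give this; what you actually need is that the complementary factor $\prod_{q\ne p}T_q$ is also fully invariant. This holds because that factor equals $p^\omega M$ (multiplication by $p$ is bijective on each $T_q$ with $q\ne p$ and eventually kills $T_p$), and $p^\omega M$ is always fully invariant. With that observation the decomposition $\phi=\prod_p\phi_p$ follows, and your uniform stabilisation by step $e$ goes through.

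Your necessity argument is the interesting part, and it is worth noting that the paper later reproduces exactly this reasoning in its proof of Theorem~\ref{cotorsion}: embed $G$ in $P=\prod_p T_p$ via Proposition~\ref{embedding}, observe $P/G$ is torsion-free divisible, and split. The paper phrases the splitting through the cotorsion property (the short exact sequence with torsion-free divisible cokernel splits), whereas you invoke pure-injectivity after checking purity of $G$ in $P$; these are equivalent formulations of the same step, so your approach and the paper's are in substance identical. Your final paragraph, extending an endomorphism of $T$ to one of $\prod_p T_p$ in order to deduce that $T$ is Sco-H, is a clean way to finish and is not spelled out anywhere in the paper.
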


So, with the aid of Proposition~\ref{reduction}, we can slightly generalize the last result as follows:

\begin{corollary}
Suppose $G=R\oplus D$ is an algebraically compact group, where $R$ is reduced with torsion $T_R=\bigoplus_p T_p$ and $D$ is divisible. Then, $G$ is Sco-H if and only if $T_R$ is Sco-H, $R=\prod_p T_p$ and there is an $n_0\in \N$ such that $r_0(D)\leq n_0$ and $r_p(D)\leq n_0$ for all primes $p$.
\end{corollary}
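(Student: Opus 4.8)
The plan is to reduce the statement entirely to results already in hand, namely Corollary~\ref{reduction} and Theorem~\ref{3.1}. First I would note that, since $R$ is reduced and $D$ is divisible, $\mathrm{Hom}(D,R)=\{0\}$, so the hypotheses of Corollary~\ref{reduction} are met verbatim. That corollary tells us immediately that $G=R\oplus D$ is Sco-H if and only if $R$ is Sco-H and there is an $n_0\in\N$ with $r_0(D)\leq n_0$ and $r_p(D)\leq n_0$ for all primes $p$. This single step isolates the divisible summand $D$ and accounts for exactly the rank condition appearing in the statement; equivalently, one could invoke Proposition~\ref{splitinvariant} together with \cite[Proposition~3.1]{A} to obtain the same reduction.

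It then remains to unpack the condition ``$R$ is Sco-H.'' Here I would observe that $R$, being a direct summand of the algebraically compact group $G$, is itself algebraically compact, and it is reduced by hypothesis; hence Theorem~\ref{3.1} applies to $R$. That theorem says precisely that $R$ is Sco-H if and only if its torsion subgroup $T_R=\bigoplus_p T_p$ is Sco-H and $R\cong\prod_p T_p$, which is the meaning of the (mildly informal) equality $R=\prod_p T_p$ written in the statement. Substituting this equivalence into the conclusion of the previous paragraph yields exactly the asserted characterization, so the two directions of the ``if and only if'' both fall out at once.

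I do not expect a genuine obstacle; the corollary is essentially a bookkeeping amalgamation of Corollary~\ref{reduction} and Theorem~\ref{3.1}. The only points deserving an explicit word are: (a) that a direct summand of an algebraically compact group is again algebraically compact, which legitimizes the application of Theorem~\ref{3.1} to $R$; and (b) that the primary $p$-components of $T_R$ coincide with the groups $T_p$ named in the statement, which is immediate from the hypothesis $T_R=\bigoplus_p T_p$. Stating these identifications cleanly is the closest thing to a ``hard part,'' and it is routine.
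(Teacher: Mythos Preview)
Your proposal is correct and follows exactly the route the paper intends: the paper simply remarks that the corollary follows ``with the aid of'' Corollary~\ref{reduction} together with Theorem~\ref{3.1}, and your write-up spells out precisely this two-step reduction, including the one point the paper leaves implicit (that $R$, as a direct summand of the algebraically compact $G$, is itself algebraically compact so that Theorem~\ref{3.1} applies to it).
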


The following generalizes the above result to the class of arbitrary cotorsion groups.

\begin{theorem}\label{cotorsion}
Suppose $G$ is an Sco-H group. Then, $G$ is cotorsion if and only if it is algebraically compact.
\end{theorem}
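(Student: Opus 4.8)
The plan is to reduce to the case of reduced groups, use Proposition~\ref{embedding} to place such a $G$ inside the product $\prod_p T_p$, and then show that the cotorsion hypothesis forces that inclusion to be an equality. One direction is free of hypotheses: every algebraically compact group is cotorsion, so the implication ``algebraically compact $\Rightarrow$ cotorsion'' may simply be quoted. For the converse, suppose $G$ is Sco-H and cotorsion, and write $G=R\oplus D$ with $R$ reduced and $D$ divisible. Being divisible, $D$ is injective, hence both cotorsion and (being pure-injective) algebraically compact; moreover each of these two properties passes to direct summands and holds for $R\oplus D$ once it holds for $R$ (for cotorsion because $\mathrm{Ext}(\Q,R\oplus D)\cong\mathrm{Ext}(\Q,R)$, and for algebraic compactness by closure under summands and products). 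Since $R$ is Sco-H by \cite[Proposition~2.2]{A}, it suffices to prove that a reduced, Sco-H, cotorsion group $G$ is algebraically compact.

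So let $G$ be reduced, Sco-H and cotorsion. Proposition~\ref{embedding} then yields that every $T_p$ is finite and, after identifying $G$ with the group constructed there, that $T\leq G\leq P:=\prod_p T_p$ with $G/T$ torsion-free and \emph{divisible}. A routine componentwise computation shows that $P/T$ is a $\Q$-vector space and that $P$ is reduced: for the latter, $\bigcap_{n\geq 1} nP=\prod_p \bigcap_{n\geq 1} nT_p=0$ because every $T_p$ is finite; for $P/T$ torsion-free and divisible, one uses that multiplication by $n$ is an automorphism of the finite $p$-group $T_p$ whenever $p\nmid n$. Since $G/T$ is a divisible subgroup of the $\Q$-vector space $P/T$, it is a $\Q$-subspace, and therefore the quotient $P/G\cong(P/T)/(G/T)$ is itself a $\Q$-vector space.

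Now the cotorsion hypothesis enters: $\mathrm{Ext}(\Q,G)=0$, hence $\mathrm{Ext}(V,G)=0$ for every $\Q$-vector space $V$, since $\mathrm{Ext}(\bigoplus_i\Q,\,G)\cong\prod_i\mathrm{Ext}(\Q,G)$. Applying this with $V=P/G$, the short exact sequence $0\to G\to P\to P/G\to 0$ splits, so $P\cong G\oplus(P/G)$; since $P=\prod_p T_p$ is reduced while $P/G$ is divisible, this forces $P/G=0$, that is, $G=\prod_p T_p$. The latter is a direct product of finite groups, and a direct product of algebraically compact groups is algebraically compact; hence $G$ is algebraically compact, as desired. (Once $G\cong\prod_p T_p$ is known, one may alternatively finish by citing Theorem~\ref{3.1}.)

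The one step I expect to require care is the use of Proposition~\ref{embedding}: it is essential that $G/T$ be \emph{divisible}, not merely torsion-free, for otherwise $P/G$ could pick up quasicyclic direct summands $\Z(p^\infty)$, and $\mathrm{Ext}(\Z(p^\infty),G)$ need not vanish for a cotorsion group $G$ (for instance, it is nonzero when $G$ is the additive group of $p$-adic integers), so the splitting argument would collapse. Everything else — the reduction to reduced groups, the componentwise facts about $P$, and the final splitting-plus-reducedness step — is routine.
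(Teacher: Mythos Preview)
Your proof is correct and follows essentially the same route as the paper: reduce to the reduced case, embed $G$ in $P=\prod_p T_p$ via Proposition~\ref{embedding}, observe that $P/G$ is torsion-free divisible so that the cotorsion hypothesis splits $G$ off $P$, and conclude $G=P$ since $P$ is reduced. The paper's argument is slightly terser (it simply asserts that $P/T$ is torsion-free divisible and that $P$ is reduced rather than verifying these componentwise), but structurally the two proofs are identical.
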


\begin{proof}
It is easy to inspect that a group is either cotorsion or algebraically compact if, and only if, its reduced part has that property. There is, therefore, no loss of generality in assuming that $G$ is reduced. Certainly, if $G$ is algebraically compact, then it is cotorsion (since any algebraically compact group is cotorsion as formulated in \cite{F,Fu}).

Conversely, suppose $G$ is cotorsion (and reduced Sco-H). If $T=\oplus_p T_p$ is the torsion subgroup of $G$, then in view of Proposition~\ref{embedding}, each $T_p$ is finite, and $G\cong \hat G$, where $T\leq \hat G\leq \prod_p T_p:=P$ and $\hat G/T$ is divisible and torsion-free. Hence, $\hat G/T$ will be a divisible direct summand of
$P/T$, which is also torsion-free divisible. Therefore, $P/\hat G\cong (P/T)/(\hat G/T)$ will also be torsion-free and divisible whence, by the definition of cotorsion groups, $\hat G$ must be a direct summand of $P$; i.e., $P=\hat G\oplus E$, where $E\cong P/\hat G$ is torsion-free and divisible. However, since $P$ is reduced, it automatically follows that $E=\{0\}$, i.e., $G\cong \hat G\cong P$ is algebraically compact, as pursued.
\end{proof}

As a direct consequence of Theorems~\ref{cotorsion} and \ref{3.1}, we derive:

\begin{corollary}\label{cotor}
If $G$ is a reduced adjusted cotorsion group, then $G$ is Sco-H if and only if $T$ is Sco-H.
\end{corollary}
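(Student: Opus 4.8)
The plan is to prove the two implications separately, drawing on Theorems~\ref{cotorsion} and~\ref{3.1} as the main tools, together with the classical structure theory of adjusted cotorsion groups.

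For the forward implication, suppose $G$ is Sco-H. Since $G$ is cotorsion by hypothesis, Theorem~\ref{cotorsion} shows at once that $G$ is algebraically compact; being also reduced, Theorem~\ref{3.1} then forces $T=\bigoplus_p T_p$ to be Sco-H (and, incidentally, $G\cong\prod_p T_p$). Observe that the adjustedness of $G$ is not needed for this direction --- only cotorsion-ness is used --- and, conversely, without adjustedness the statement would fail, since a reduced cotorsion group may carry a torsion-free summand such as the $q$-adic integers, which is not even co-Hopfian.

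For the converse, assume $T$ is Sco-H. By point (A) above there is an $e\in\N$ with $\mathrm{ord}(T_p)\le p^e$ for every prime $p$; in particular each $T_p$ is finite. The crux is to identify $G$. Since $G$ is a reduced adjusted cotorsion group, the Harrison correspondence (cf.\ \cite{Fu}) gives $G\cong\mathrm{Ext}(\Q/\Z,T)$, the cotorsion hull of its own torsion subgroup. Writing $\Q/\Z=\bigoplus_q\Z(q^\infty)$ yields $\mathrm{Ext}(\Q/\Z,T)\cong\prod_q\mathrm{Ext}(\Z(q^\infty),T)$, and for a fixed prime $q$ one has $\mathrm{Ext}(\Z(q^\infty),T)\cong\mathrm{Ext}(\Z(q^\infty),T_q)$ --- the summands $T_r$ with $r\ne q$ are $q$-divisible and $q$-torsion-free and hence contribute nothing --- which in turn equals the $q$-adic completion of $T_q$, namely $T_q$ itself because $T_q$ is finite. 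Thus $G\cong\prod_p T_p$, an (unrestricted) product of finite groups, hence a reduced algebraically compact group. Since $T$ is Sco-H and $G\cong\prod_p T_p$, Theorem~\ref{3.1} now delivers that $G$ is Sco-H, completing the argument.

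I expect the only genuine obstacle to lie in the converse, specifically in the identification $G\cong\prod_p T_p$: it rests on the structure theorem for adjusted cotorsion groups together with the (standard but slightly technical) computation of the cotorsion hull of a torsion group with finite primary components. An alternative that sidesteps the explicit $\mathrm{Ext}$-calculation is to observe that $\prod_p T_p$ is itself a reduced adjusted cotorsion group whose torsion subgroup is precisely $\bigoplus_p T_p=T$, and then to invoke the uniqueness part of the Harrison correspondence to conclude $G\cong\prod_p T_p$. Once this identification is secured, Theorem~\ref{3.1} closes both directions with no further effort.
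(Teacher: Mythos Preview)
Your proof is correct and follows the route the paper intends. The paper offers no argument beyond declaring the corollary ``a direct consequence of Theorems~\ref{cotorsion} and~\ref{3.1}''; your forward direction is exactly that, and for the converse you supply the missing ingredient---the Harrison identification $G\cong\mathrm{Ext}(\Q/\Z,T)\cong\prod_p T_p$ for a reduced adjusted cotorsion group with finite primary components---which is standard but not literally contained in those two theorems. Your alternative via the uniqueness of the adjusted cotorsion hull is equally valid and perhaps cleaner than the explicit $\mathrm{Ext}$ computation.
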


We now say a group $G$ is {\it uniformly Sco-H} if there is a fixed $m\in \N$ such that, for all endomorphisms $\phi:G\to G$, we have $\phi^m(G)=\phi^{m+1}(G)$; we call such an integer $m$ satisfying this definition an {\it Sco-H bound} for $G$. It is quite obvious that a group that is uniformly Sco-H is always Sco-H, and we want to investigate when the reciprocal implication is also valid. To that target, it is easy to check that a direct summand of a uniformly Sco-H group remains so. The proof of Proposition~\ref{splitinvariant} also supplies the following partial converse.

\begin{proposition}\label{uniform1}
Suppose $G=A\oplus B$, where $\mathrm{Hom}(B,A)=\{0\}$. Then, $G$ is uniformly Sco-H if and only if $A$ and $B$ are both uniformly Sco-H.
\end{proposition}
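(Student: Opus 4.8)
The plan is to reuse, essentially verbatim, the computation from the proof of Proposition~\ref{splitinvariant}, only keeping careful track of the exponents that appear. For \emph{necessity}, I would simply invoke the fact already recorded above — that a direct summand of a uniformly Sco-H group is again uniformly Sco-H; since both $A$ and $B$ are direct summands of $G$, each inherits the property, and in fact any Sco-H bound for $G$ serves as one for $A$ and for $B$.

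For \emph{sufficiency}, suppose $A$ has an Sco-H bound $m_A$ and $B$ has an Sco-H bound $m_B$, and let $\phi:G\to G$ be an arbitrary endomorphism. Since $\mathrm{Hom}(B,A)=\{0\}$, the summand $B$ is fully invariant in $G$, so $\phi$ restricts to an endomorphism of $B$; as $B$ is uniformly Sco-H, we obtain $\phi^{m_B}(B)=\phi^{m_B+1}(B)$ \emph{with the same $m_B$ for every $\phi$}. Likewise $\phi$ induces an endomorphism $\overline\phi$ of $G/B\cong A$; because $A$ is uniformly Sco-H with bound $m_A$ and isomorphic groups share Sco-H bounds, we get $\overline\phi^{\,m_A}(G/B)=\overline\phi^{\,m_A+1}(G/B)$, that is, $\phi^{m_A}(A)+B=\phi^{m_A+1}(A)+B$, once again with $m_A$ independent of $\phi$.

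Now I would run exactly the chain of inclusions and equalities displayed in the proof of Proposition~\ref{splitinvariant}, taking there $n:=m_B$ and $m:=m_A$, to conclude that $\phi^{m_A+m_B}(G)=\phi^{m_A+m_B+1}(G)$. Since $m_A+m_B$ depends only on $A$ and $B$ and not on the chosen $\phi$, it is an Sco-H bound for $G$, as required. I do not expect a genuine obstacle: the only point deserving a moment's care is verifying that the two exponents produced above are truly uniform — in particular that the bound for the quotient can be taken to be the fixed bound $m_A$ of $A$, which is legitimate precisely because every endomorphism of $G/B\cong A$, and hence each induced $\overline\phi$, is subject to that bound — after which the bookkeeping of exponents is identical to the earlier argument.
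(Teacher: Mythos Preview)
Your proposal is correct and follows exactly the approach the paper indicates: it reuses the computation from Proposition~\ref{splitinvariant} while tracking that the exponents $m_A$ and $m_B$ are fixed in advance and independent of $\phi$, yielding the uniform bound $m_A+m_B$ (which the paper itself records immediately after the statement). The necessity direction via direct summands is likewise what the paper intends.
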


In the context of the last result, if $m$ is an Sco-H bound for $A$ and $n$ is an Sco-H bound for $B$, then $m+n$ is an Sco-H bound for $A\oplus B$. Also, Proposition~\ref{uniform1} has the following immediate consequence.

\begin{corollary}\label{uniform7}
Suppose $G=A\oplus D$, where $D$ is divisible and $A$ is reduced. Then, $G$ is uniformly Sco-H if and only if both $A$ and $D$ have that property.
\end{corollary}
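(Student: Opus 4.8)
The plan is to deduce this directly from Proposition~\ref{uniform1} by taking $B := D$. The single point that needs checking is the hypothesis $\mathrm{Hom}(D,A)=\{0\}$: since any homomorphic image of a divisible group is again divisible, and $A$ is reduced, the image of any homomorphism $D\to A$ must be a divisible subgroup of $A$, hence $\{0\}$. This is exactly the same observation already used in the proof of Corollary~\ref{reduction}.

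Once $\mathrm{Hom}(D,A)=\{0\}$ is in hand, Proposition~\ref{uniform1} applies verbatim and yields that $G=A\oplus D$ is uniformly Sco-H if and only if both $A$ and $D$ are uniformly Sco-H, which is the assertion. One could also add the quantitative remark, paralleling the comment after Proposition~\ref{uniform1}, that if $m$ is an Sco-H bound for $A$ and $n$ is an Sco-H bound for $D$, then $m+n$ is an Sco-H bound for $G$, though this is not strictly needed for the statement.

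There is essentially no obstacle here: the corollary is a one-line specialization of the preceding proposition, the only substantive ingredient being the standard fact that there are no nonzero homomorphisms from a divisible group into a reduced group. If anything, the only thing to be mildly careful about is that Proposition~\ref{uniform1} is stated with the condition $\mathrm{Hom}(B,A)=\{0\}$ (maps from the second summand into the first), so one must match $B$ with the divisible summand $D$ and $A$ with the reduced summand, not the other way around.
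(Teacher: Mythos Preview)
Your proposal is correct and matches the paper's approach exactly: the paper simply states that the corollary is an immediate consequence of Proposition~\ref{uniform1}, and you have spelled out the one verification needed, namely that $\mathrm{Hom}(D,A)=\{0\}$ because homomorphic images of divisible groups are divisible while $A$ is reduced.
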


We note the following assertion, which gives some elementary examples of Sco-H groups that are, in fact, uniformly Sco-H.

\begin{proposition}\label{uniform8}
Suppose $G$ is an Sco-H group. If $G$ is a member of one of the following classes, then it is uniformly Sco-H.

(a) $G=D$ is divisible;

(b) $G=T$ is torsion.
\end{proposition}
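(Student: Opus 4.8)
The plan is, in each case, to decompose $G$ into finitely many direct summands each carrying a \emph{uniform} Sco-H bound, and then to glue these with Proposition~\ref{uniform1}. The mechanism that produces uniform bounds is the following elementary observation: if a group $M$ has finite composition length $\ell$, then the chain $M=\im\phi^0\supseteq\im\phi^1\supseteq\cdots$ can contain at most $\ell$ proper inclusions, so $\phi^{\ell}(M)=\phi^{\ell+1}(M)$ for \emph{every} endomorphism $\phi$ of $M$; likewise, if $M$ is divisible and every strictly descending chain of \emph{divisible} subgroups of $M$ has length at most $\ell$, then, since each $\im\phi^{n}$ is again divisible, $\ell$ is again a uniform Sco-H bound for $M$.

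Next I would isolate the pieces, using the cited classifications. By \cite[Proposition~3.1]{A} a divisible Sco-H group has all its torsion-free and $p$-ranks bounded by one integer $n_0$, so its torsion part is $\bigoplus_p\Z(p^{\infty})^{(b_p)}$ with every $b_p\le n_0$ and its torsion-free part is $\Q^{(a)}$ with $a\le n_0$; and by point~(A) a reduced torsion Sco-H group equals $\bigoplus_p T_p$ with each $T_p$ finite of order at most $p^{e}$ --- hence of composition length at most $e$ --- for one integer $e$. Three computations then cover all the summands that arise. First, for $R=\bigoplus_p T_p$ of the latter type, each $T_p$ is fully invariant, so any endomorphism is $\bigoplus_p\phi_p$, and $\phi^{e}(R)=\bigoplus_p\phi_p^{e}(T_p)=\bigoplus_p\phi_p^{e+1}(T_p)=\phi^{e+1}(R)$ since $\mathrm{length}(T_p)\le e$; thus $e$ is a uniform bound for $R$. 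Second, for $V=\bigoplus_p\Z(p^{\infty})^{(b_p)}$ with every $b_p\le n_0$, writing an endomorphism as $\bigoplus_p\psi_p$, each $\im\psi_p^{n}$ is a divisible subgroup of $\Z(p^{\infty})^{(b_p)}$, hence a direct summand of $\im\psi_p^{\,n-1}$, so the $p$-rank strictly drops at each proper inclusion and the chain has length at most $b_p\le n_0$; thus $n_0$ is a uniform bound for $V$. Third, for $\Q^{(a)}$ with $a\le n_0$ every endomorphism is $\Q$-linear, each $\im\phi^{n}$ is a subspace of dimension at most $a$, and so $n_0$ is a uniform bound for $\Q^{(a)}$.

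Finally I would assemble the bounds. For part~(b), write $T=R\oplus V$ with $R$ the reduced part and $V$ the divisible part (which is torsion divisible); since a divisible group has no nonzero homomorphism into a reduced group, $\mathrm{Hom}(V,R)=\{0\}$, so Proposition~\ref{uniform1} together with the first two computations shows $T$ is uniformly Sco-H with bound $e+n_0$. For part~(a), write $D=D_0\oplus D_t$ with $D_0\cong\Q^{(a)}$ torsion-free divisible and $D_t$ the torsion (divisible) part; since a torsion group has no nonzero homomorphism into a torsion-free one, $\mathrm{Hom}(D_t,D_0)=\{0\}$, so Proposition~\ref{uniform1} together with the last two computations shows $D$ is uniformly Sco-H with bound $2n_0$. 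No circularity arises, because the torsion-divisible summand appearing in both parts is treated by the second computation directly. The one point deserving genuine care is the input: one must be sure that the invoked classifications do provide \emph{uniform} numerical data --- a common bound on $|T_p|$ (equivalently, on the composition length of $T_p$) across all primes in the reduced torsion case, and a common bound on all ranks in the divisible case --- after which everything reduces to the bookkeeping above.
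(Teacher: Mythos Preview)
Your proof is correct and follows essentially the same route as the paper: decompose via Proposition~\ref{uniform1} into summands on which the classifications of \cite{A} supply uniform rank or order bounds, then read off a uniform Sco-H bound from those numerical data. The only differences are organizational---the paper proves (a) first and then uses it to reduce (b) to the reduced case, whereas you isolate three building blocks (reduced torsion, divisible torsion, torsion-free divisible) and assemble both parts from them---and you are more explicit about why the bounds work (composition length, rank drops), where the paper simply asserts the conclusion.
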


\begin{proof}
For (a), suppose $G=A\oplus T$, where $A$ is torsion-free. Consulting with Proposition~\ref{uniform1}, $G$ is uniformly Sco-H if, and only if, both $A$ are $T$ have this property. If $G=A$ is torsion free, then it must have finite rank, say $m$. It easily follows that $m$ will also be an Sco-H bound for $G$, so that it is uniformly Sco-H. If $G=T$ is torsion, and each $p$-component $T_p$ has $p$-rank at most $n$, then it also follows automatically that $n$ will be an Sco-H bound for $G$.

Regarding (b), by (a) we may assume that $G$ is reduced, so there is a fixed natural number $e$ such that $\mathrm{card}(T_p)\leq p^e$ for every prime $p$. It is then plainly seen that $e$ will be an Sco-H bound for $G$.
\end{proof}

The following criterion gives a complete description of when a group which is Sco-H is, in fact, uniformly Sco-H.

\begin{theorem}\label{strongly}
Suppose $G$ is a group that is Sco-H. Then, $G$ is uniformly Sco-H if and only if $T$ is Sco-H.
\end{theorem}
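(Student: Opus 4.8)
The plan is to prove both implications, after first reducing to the reduced case. Writing $G = R \oplus D$ with $R$ reduced and $D$ divisible, I would note that $D$, being a direct summand of the Sco-H group $G$, is Sco-H, hence uniformly Sco-H by Proposition~\ref{uniform8}(a); since $\mathrm{Hom}(D,R) = \{0\}$, Proposition~\ref{uniform1} then shows $G$ is uniformly Sco-H exactly when $R$ is. On the torsion side, $T = T_R \oplus T_D$ with $\mathrm{Hom}(T_D, T_R) = \{0\}$ and $T_D$ torsion divisible of bounded ranks (because $D$ is Sco-H), so $T_D$ is Sco-H and Proposition~\ref{splitinvariant} gives that $T$ is Sco-H exactly when $T_R$ is. Hence it suffices to treat $R$, so from now on I assume $G$ is reduced and Sco-H; by Proposition~\ref{embedding} each $T_p$ is then finite, $T \leq G \leq P := \prod_p T_p$, the quotient $\overline G := G/T$ is torsion-free divisible, $G^1 = \{0\}$, and $P$ is the $\Z$-adic completion of $T$.

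For the implication ``uniformly Sco-H $\Rightarrow$ $T$ Sco-H'', suppose $m$ is an Sco-H bound for $G$. Applying the definition to multiplication by $p$ gives $p^m G = p^{m+1} G$; since $T_p$ is pure in $G$, this forces $p^m T_p = p^{m+1} T_p$, so $p^m T_p$ is a $p$-divisible $p$-group inside the reduced group $G$ and hence is $\{0\}$. Thus $\mathrm{ord}(T_p) \leq p^m$ for all $p$, and point (A) shows $T$ is Sco-H. (This direction does not really need the reduction.)

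For the converse I would argue as follows. Assume $T$ is Sco-H and fix, via (A), an $e \in \N$ with $\mathrm{ord}(T_p) \leq p^e$ for every $p$; the goal is to show $e$ itself is a uniform Sco-H bound for $G$. Let $\phi \in \mathrm{End}(G)$. Since group homomorphisms are continuous for the $\Z$-adic topology, $T$ is dense in $P$, and $P$ is complete and Hausdorff, $\phi$ extends uniquely to $\widehat\phi \in \mathrm{End}(P)$; and since $\prod_{q \ne p} T_q$ is $p$-divisible while $T_p$ is reduced, $\mathrm{End}(P) = \prod_p \mathrm{End}(T_p)$, so $\widehat\phi = (\phi_p)_p$ acts coordinatewise. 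Because $\mathrm{ord}(T_p) \leq p^e$, the chain $T_p \supseteq \phi_p(T_p) \supseteq \cdots$ stabilizes within $e$ steps, so the standard Fitting argument at index $e$ yields $T_p = K_p \oplus I_p$ with $K_p = \ker \phi_p^e$, $I_p = \phi_p^e(T_p)$, $\phi_p^e|_{K_p} = 0$, and $\phi_p|_{I_p}$ an automorphism. Setting $N := \prod_p K_p$, $J := \prod_p I_p$ gives $P = N \oplus J$ with $\widehat\phi^{\,e}|_N = 0$ and $\widehat\phi|_J$ an automorphism of $J$ preserving the subgroup $\bigoplus_p I_p$. From this I would extract, for all $k \geq e$: first, $\phi^k(G) \subseteq \widehat\phi^{\,k}(P) = J$, whence $T \cap \phi^k(G) = \bigoplus_p I_p = \phi^e(T)$ (so the torsion parts of the $\phi^k(G)$ are already constant at $k = e$); and second, with $W := J / \bigoplus_p I_p$, a $\Q$-vector space, and $\overline\alpha \in \mathrm{Aut}(W)$ the map induced by $\widehat\phi|_J$, one has $\overline\phi^{\,k}(\overline G) = \overline\alpha^{\,k}(S)$, where $S$ is the image of $\overline G$ under $\overline G \hookrightarrow P/T \twoheadrightarrow W$.

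The decisive step, and the one I expect to be the main obstacle, is to prove $\overline\alpha(S) = S$. One always has $\overline\alpha(S) \subseteq S$ (because $\overline\alpha$ commutes with the projection onto $W$ and $\phi(G) \subseteq G$); if the inclusion were proper, injectivity of $\overline\alpha$ would produce a strictly descending chain $S \supsetneq \overline\alpha(S) \supsetneq \overline\alpha^{\,2}(S) \supsetneq \cdots$, giving $\overline\phi^{\,k}(\overline G) \supsetneq \overline\phi^{\,k+1}(\overline G)$ and hence $\phi^k(G) \supsetneq \phi^{k+1}(G)$ for every $k \geq e$ — contradicting that $G$ is Sco-H. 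So $\overline\alpha(S) = S$, i.e. $\overline\phi^{\,e}(\overline G) = \overline\phi^{\,e+1}(\overline G)$; combining this with the constancy of the torsion parts and the elementary remark that subgroups $A \supseteq B$ of $G$ with $T \cap A = T \cap B$ and $(A+T)/T = (B+T)/T$ must coincide, we get $\phi^e(G) = \phi^{e+1}(G)$, so $e$ is a uniform Sco-H bound. The reason the final step is delicate is that $W$ may be infinite-dimensional, and on such a space an automorphism can well have a proper invariant subspace on which it is not onto (a shift); it is precisely the hypothesis that $G$ is Sco-H — not merely that $T$ is — that forbids this, and that is what makes the uniform bound $e$ go through.
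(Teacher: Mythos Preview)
Your reduction to the reduced case and your sufficiency direction are correct. For sufficiency you and the paper take closely related but differently packaged routes. You run Fitting's lemma coordinatewise to split $P=N\oplus J$ with $\widehat\phi^{\,e}|_N=0$ and $\widehat\phi|_J$ an automorphism, then compare $\phi^e(G)$ and $\phi^{e+1}(G)$ by separately matching their torsion parts (both equal $\bigoplus_p I_p$) and their images in the $\Q$-vector space $W=J/\bigoplus_p I_p$, using the Sco-H hypothesis on $G$ to rule out a strictly descending orbit $\overline\alpha^{\,k}(S)$. The paper instead first proves a Claim that $T_{\phi^k(G)}=\phi^k(T)$, takes $m$ to be an Sco-H bound for $T$, sets $A=\phi^m(G)$ with torsion $S=\phi^m(T)$, embeds $A$ in $\prod_p S_p$, observes that $\phi$ restricts to an automorphism of $S$ and hence of $\prod_p S_p$, so that $\phi|_A$ is injective; if $\phi^m(G)\ne\phi^{m+1}(G)$ then injectivity makes the chain strictly decrease forever, contradicting Sco-H. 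The core mechanism---eventual injectivity forces stabilization at the torsion bound, else Sco-H of $G$ fails---is the same in both arguments.

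There is, however, a genuine gap in your necessity direction. From $p^m G=p^{m+1}G$ you correctly deduce $p^m T_p=\{0\}$, but this only bounds the \emph{exponent} of $T_p$, not its order; the step ``thus $\mathrm{ord}(T_p)\le p^m$'' does not follow, and exponent-boundedness alone does not yield (A). For instance $T=\bigoplus_{n}\Z(p_n)^n$ has each $T_{p_n}$ of exponent $p_n$, yet the coordinatewise nilpotent shift shows $T$ is not Sco-H. The paper closes this gap by also bounding the \emph{rank}: since each $T_p$ is a direct summand of $G$ (Proposition~\ref{embedding}), a nilpotent endomorphism of $T_p$ of index $r_p$ extends to $G$, whence $r_p-1\le m$; combined with the exponent bound $b_p\le m+1$ this gives $|T_p|\le p^{(m+1)^2}$. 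You need to add this rank argument to complete the forward implication.
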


\begin{proof} Let $G=A\oplus D$, where $A$ is reduced and $D$ is divisible. It follows from Corollary~\ref{uniform7} and Proposition~\ref{uniform8}(a) that $G$ is uniformly Sco-H if, and only if, $A$ has that property, and $T=T_A\oplus T_D$ is uniformly Sco-H if, and only if, $T_A$ has that property. Therefore, there is no loss of generality in assuming that $G=A$ is reduced, so that we can view $G$ as a pure subgroup of $P=\prod_p T_p$ containing $T$, where each $T_p$ is finite.

Regarding necessity, suppose that $G$ uniformly Sco-H; let $m$ be an Sco-H bound for $G$. If $p$ is any prime, we contend that $\mathrm{card}(T_p)\leq p^{(m+1)^2}$, giving the desired implication. If $r_p$ is the rank of $T_p$, then it is easy to see that there is an endomorphism $\phi:T_p\to T_p$ such that $\phi^{r_p-1}(G)\ne 0=\phi^{r_p}(G)$. This shows that $r_p-1\leq m$, i.e., $r_p\leq m+1$. Similarly, if $b_p$ is the smallest non-negative integer such that $p^{b_p} T_p=\{0\}$, then it follows that $b_p-1\leq m$, i.e., $b_p\leq m+1$. Therefore, since $T_p$ is the direct sum of $r_p$ cyclic groups of order at most $p^{b_p}$, we deduce
$$
\mathrm{card}(T_p)\leq (p^{b_p})^{r_p}= p^{b_pr_p}  \leq p^{(m+1)^2},
$$
thus establishing one implication.

\medskip

Turning to sufficiency, suppose $T$ is Sco-H, i.e., for some $e\in \N$, we have $\mathrm{card}(T_p)\leq p^e$ for all primes $p$. Let $m$ be some Sco-H bound for $T$. We will show that $m$ is also an Sco-H bound for $G$. To that goal, let $\phi:G\to G$ be some endomorphism. We first establish the following assertion.

\medskip

{\bf Claim:} Suppose $\gamma: G\to G$ is an endomorphism with image $\gamma(G)=X$. Then, $T_X=\gamma(T)$ and $X/T_X$ is divisible:

\medskip

For simplicity, let $\gamma(T)=S\leq  T_X$, and let $S_p$ be its $p$-component. There are clearly surjections $$G/T\to X/S\to X/T_X,$$ and since $G/T$ is divisible, so are $X/S$ and $X/T_X$. So, we only need to show that $T_X=\phi(T)=S$.

Note that $G/S$ has torsion $T/S\cong \bigoplus_p (T_p/S_p)$, and since each $T_p$ is finite, it follows that $T/S$ is reduced. As $X/S$ is divisible and contained in $G/S$, we can conclude that $X/S$ is torsion-free. Therefore, it must be that $T_X=S=\phi(T)$, establishing the stated Claim.

\medskip

If $k<\omega$, then let $G^k:=\phi^k(G)$, so we arrive at the descending chain of subgroups $$G\supseteq G^1\supseteq G^2\supseteq \cdots.$$ According to the last Claim with $\gamma$ equaling $\phi^k$, it follows at once that if $[G^k]_T=: T^k$, then $T^k=\phi^k(T)$ and $G^k/T^k$ is divisible.

However, since $m$ is an Sco-H bound for $T$, it follows that $T^m=T^{m+1}$; thus, we need to show $G^m=G^{m+1}$. For simplicity, denote $A=G^m$ and $B=G^{m+1}=\phi(A)$, and also designate $T^m=T^{m+1}$ by $S$; so, $B\leq A$ and we need to prove only $A=B$.

To substantiate this, since the first Ulm subgroups satisfy $A^1\leq P^1=\{0\}$, we can identify $A$ with a pure subgroup of $\prod_p S_p$ containing $S=\bigoplus_p S_p$ (i.e., it is an sp-group). So, the homomorphism $\phi:A\to A$, whose image is $B$, is entirely determined by its restriction to $S$. However, since each $S_p$ is finite, and $\phi$ induces a surjective endomorphism $S\to S$, we can infer that $\phi$ restricts to an automorphism of $S$.

Note that $\phi$ also extends to a homomorphism $\prod_p S_p\to \prod_p S_p$. Since $\phi$ is an automorphism on $S$, it is too an automorphism on $\prod_p S_p$, so that the homomorphism $\phi:A\to B$ is not only surjective, but also injective; in other words, $\phi: \phi^m(G)\to \phi^{m+1}(G)$ is an isomorphism.

Assume now by way of contradiction that $\phi^m(G)\ne \phi^{m+1}(G)$. Then, for all $j<\omega$, since $\phi^j$ will also be an injection when restricted to $A=\phi^m(G)$, it follows that, for all such $j$, we discover $\phi^{m+j}(G)\ne \phi^{m+j+1}(G)$, which obviously contradicts that $G$ is Sco-H, and completes the all details in our proof.
\end{proof}

We now wish to consider some particular examples as recently discussed. Again, Theorem~\ref{char} says that, if $G$ is some group and we know that both its torsion and the corresponding torsion-free factor are both Sco-H, then we can conclude that $G$ is so, as well; in other words, these two conditions are only \textit{sufficient} to guarantee that $G$ be Sco-H. That is why, the {\bf big} problem is still the following: Does there exist an Sco-H group $G$ such that $T$ and/or $G/T$ are {\it not} Sco-H? So, we now proceed by constructing a series of examples (see, for a more account, Examples~\ref{ex0}-\ref{ex3} stated below which give a partial solution to this general query) subsumed by the quoted above Theorem~\ref{char} and the corresponding question.

\medskip

First, we intend to show that for $G$ to be Sco-H it is \textit{not} necessary that $T$ be Sco-H, nor is it necessary that $G/T$ be Sco-H.

\begin{example}\label{ex0} There is a group $G$ such that both $G$ and $T$ are Sco-H, but $G/T$ is \textit{not} Sco-H.
\end{example}

\begin{proof} Let $T$ be Sco-H with $T_p\ne \{0\}$ for an infinite number of primes $p$; for instance, we might have $T=\bigoplus_p\Z(p)$. Then, with Theorem~\ref{3.1} at hand, its $\Z$-adic completion $G$, that is, $G=\prod_p T_p$, will be Sco-H. However, in this case, $G/T$ will be divisible of infinite rank, i.e., not co-Hopfian and so not Sco-H, as asserted.
\end{proof}

A logical question which arises is of whether this example can be amended to groups of finite torsion-free rank.

\medskip

At this point, it might be tempting to conjecture that \textit{any} Sco-H group is, in fact, uniformly Sco-H. The following construction exhibits, however, that this is {\it not} the case.

\begin{example}\label{ex1} There is a group $G$ such that both $G$ and $G/T$ are Sco-H, but $T$ is \textit{not} Sco-H.
\end{example}

\begin{proof} Let $p_1, p_2, \dots = 2, 3, 5, 7,\cdots$ be a listing of all the primes. Put
$$T:=\oplus_{n\in \N} ~ \Z(p_n^n)=\Z(2)\oplus \Z(3^2)\oplus \Z(5^3)\oplus \Z(7^4)\oplus \cdots;$$ clearly, $T$ is not an Sco-H group. Let $$P:=\prod_{n\in \N} \Z(p_n^n)$$ be the $\Z$-adic completion of $T$. Inside of $P$, let $\mathbf z=(1)_n$, i.e., the $n$th coordinate of $\mathbf z$ is $1\in \Z(p_n^n)$.

There is, obviously, a unique pure subgroup $G\leq P$ containing both $T$ and $z$ such that $G/T\cong \Q$. So, we need to show only that $G$ is an Sco-H group; to that end, let $\phi:G\to G$ be an endomorphism. Notice that $\phi$ is entirely determined by its restriction to $T$. Thus, there is a unique vector $(\alpha_n)_{n\in \N}$, where each $\alpha_n\in\Z(p_n^n)$, so that, for all $\mathbf{y}=(y_n)_{n\in \N}\in G$, we have
$$
         \phi(\mathbf y) = (\alpha_ny_n)_{n\in \N}.
$$
In fact, it easily follows that $(\alpha_n)_{n\in \N}=\phi(\mathbf z)$.

Note, likewise, that $\phi$ induces an endomorphism $$\overline \phi:\Q\cong G/T\to G/T\cong \Q.$$
It, therefore, follows that there is an $\frac{a}{b}\in \Q$, where $a\in \Z$, $b\in \N$ are relatively prime, such that $$(b\alpha_n)_{n\in \N}=b\phi(\mathbf z)=\phi(b\mathbf z)=a\mathbf z +\mathbf x,$$ where $\mathbf x\in T$. Choose $k\in \N$ such that the following three conditions hold:

\medskip

(1) $\mathbf x\in \bigoplus_{n<k} \Z(p_n^n):=B_k$;

\medskip

(2) If $p_n\vert b$, then $n<k$;

\medskip

(3) If $a\ne 0$ and $p_n\vert a$, then $n<k$.

\medskip

\noindent Set

$$G_k:=\left(\prod_{n\geq k} \Z(p_n^n)\right)\cap G; \ \ {\rm so},\ \ G=B_k\oplus G_k.$$

We now consider two possible cases:

\medskip

\noindent{\bf Case 1.} $a=0$: It, thereby, follows that, if $n\geq k$, then since $(p_n,b)=1$ and $b \alpha_n =0$, it must be that $\alpha_n=0$. Consequently, $\phi(G_k)=\{0\}$. Hence, each subgroup $\phi^j(G)=\phi^j(B_k)$ is finite. This, evidently, ensures for some $j\in \N$ that we must have the equality $\phi^j(G)=\phi^{j+1}(G)$, as required.

\medskip

\noindent{\bf Case 2.} $a\ne 0$: Note, for all $n\geq k$, that the elements $a$ and $b$ are units in $\Z(p^n_n)$. Therefore, multiplication by $\alpha_n$ agrees with multiplication by $\frac{a}{b}$, which is an automorphism of $\Z(p^n_n)$. So, the map $\phi$ restricts to an automorphism on $T_k:=\bigoplus_{n\geq k} \Z(p^n_n)$, namely multiplication by $\frac{a}{b}$ on each factor. In addition, the map $\phi$ induces multiplication by $\frac{a}{b}$ on $$G_k/T_k\cong G/T\cong \Q.$$ Since this map is also an automorphism, one readily observes that the map $\phi$ restricts to an automorphism on $G_k$.

Thus, for all $j\in \N$, we deduce that $G_k = \phi^j(G_k)\leq \phi^j(G)$, so that $$\phi^j(G)=(B_k\cap \phi^j(G))\oplus G_k.$$ Since each $B_k\cap \phi^j(G)\leq B_k$ is finite, we conclude for some $j$ that $$B_k\cap \phi^j(G)=B_k\cap \phi^{j+1}(G),$$ whence $\phi^j(G)=\phi^{j+1}(G)$, as required.
\end{proof}

\begin{remark}
With not too much effort, it can be seen that the group $G$ in the last example has a ring structure such that any group endomorphism $\phi:G\to G$ is a multiplication by some $\alpha\in G$; in other words, $G$ is an \textit{E-ring} as defined in \cite{S}.

Can the above construction be expanded by finding an sp-group of torsion-free rank greater than 1 which is Sco-H, but its torsion part is {\it not} Sco-H, i.e., $G$ is Sco-H, but {\it not} uniformly Sco-H? Such an example can be constructed using the following result, whose proof, based upon the techniques of the above example, is left to the interested reader.
\end{remark}

\begin{proposition}\label{uniformly11}
Let $\{p_1, p_2, \dots\}$ be an infinite collection of distinct primes and, for each $i\in \N$, let $e_i\in \N$. Consider the (unitary) ring $P=\prod_{i\in \N} \Z(p^{e_i})$, and let $G$ be a (unitary) subring of $P$ containing the ideal $T=\bigoplus_{i\in \N} \Z(p^{e_i})$. If the quotient ring $G/T$ is a field, then $G$ is an E-ring which is Sco-H as an abelian group, so that, if the $e_i$s are not uniformly bounded, then $G$ is not uniformly Sco-H.
\end{proposition}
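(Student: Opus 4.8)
The plan is to follow the pattern of Example~\ref{ex1}, using the hypotheses on $G/T$ to reduce every group endomorphism of $G$ to a ring multiplication, and then to analyze the resulting chain of principal ideals directly.

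First I would assemble the structural facts. Since $G$ is a subgroup of the product of finite groups $P=\prod_{i}\Z(p_i^{e_i})$, it is reduced. If $G/T$ had prime characteristic $q$, then $qG\subseteq T$ would be torsion, forcing $G=T$ and $G/T=\{0\}$, contradicting the assumption that $G/T$ is a field; hence $G/T$ has characteristic $0$, so it is a $\Q$-vector space and in particular is torsion-free and divisible. Thus $T$ is precisely the torsion subgroup of $G$, which is what will let me invoke Theorem~\ref{strongly} at the end. Next I would verify that $G$ is an E-ring. Given a group endomorphism $\phi\colon G\to G$, put $\alpha:=\phi(\mathbf 1)=(\alpha_i)_i$, where $\mathbf 1=(1)_i$ is the identity of the ring $G$. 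As $T$ is fully invariant in $G$ and $\mathrm{Hom}\big(\Z(p_i^{e_i}),\Z(p_j^{e_j})\big)=\{0\}$ for $i\neq j$, the restriction $\phi|_T$ is ``diagonal'' and coincides with multiplication by $\alpha$ on each cyclic summand, hence on all of $T$. Then $\phi-(\text{multiplication by }\alpha)$ annihilates $T$, so it factors through the divisible group $G/T$, and its image is a divisible subgroup of the reduced group $G$, hence $\{0\}$. Therefore $\phi$ is multiplication by $\alpha$, and $\phi^{n}$ is multiplication by $\alpha^{n}$; it remains to prove that the descending chain $\alpha G\supseteq\alpha^{2}G\supseteq\cdots$ is stationary.

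I would then argue by cases on the image $\bar\alpha\in G/T$ of $\alpha$. If $\bar\alpha=0$, then $\alpha\in T$ is supported on finitely many indices, say on $\{1,\dots,k-1\}$, so $\alpha^{n}G\subseteq B_{k}:=\bigoplus_{i<k}\Z(p_i^{e_i})$ for every $n\geq 1$; since $B_{k}$ is finite, the chain stabilizes. If $\bar\alpha\neq 0$, then $\bar\alpha$ is a unit of the field $G/T$, so I may pick $\beta\in G$ with $\alpha\beta=\mathbf 1+\mathbf x$ for some $\mathbf x\in T$, and then choose $k$ so large that $\mathbf x$ is supported on $\{1,\dots,k-1\}$; for each $i\geq k$ this gives $\alpha_i\beta_i=1$ in $\Z(p_i^{e_i})$, so $\alpha_i$ is a unit. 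Setting $G_{k}:=G\cap\prod_{i\geq k}\Z(p_i^{e_i})$, one checks that $G=B_{k}\oplus G_{k}$, that multiplication by $\alpha$ preserves each summand, and --- this is the crux --- that $\phi$ restricts to an \emph{automorphism} of $G_{k}$: it is injective on $G_{k}$ because the $\alpha_i$ with $i\geq k$ are units, and it is surjective because for $\mathbf h\in G_{k}$ the element $\beta\mathbf h$ lies in $G_{k}$ and satisfies $\alpha(\beta\mathbf h)=(\mathbf 1+\mathbf x)\mathbf h=\mathbf h$, the term $\mathbf x\mathbf h$ vanishing since $\mathbf x$ and $\mathbf h$ have disjoint supports. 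Consequently $\phi^{n}(G_{k})=G_{k}$ and $\phi^{n}(G)=\phi^{n}(B_{k})\oplus G_{k}$ with $\phi^{n}(B_{k})\subseteq B_{k}$; as $B_{k}$ is finite, the chain $\phi(B_{k})\supseteq\phi^{2}(B_{k})\supseteq\cdots$ stabilizes, hence so does $\phi^{n}(G)$. This shows $G$ is Sco-H, so it is an E-ring that is Sco-H as an abelian group. Finally, if the $e_i$ are not uniformly bounded, then $T=\bigoplus_i\Z(p_i^{e_i})$ violates the bound in fact (A) (that is, \cite[Theorem~2.1]{A}), so $T$ is not Sco-H; since $G$ is Sco-H, Theorem~\ref{strongly} then yields that $G$ is not uniformly Sco-H.

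I expect the main obstacle to be the surjectivity of $\phi$ on $G_{k}$: the ``coordinatewise inverse'' $(\alpha_i^{-1}h_i)_{i\geq k}$ manifestly lives in the completion $P$, and the real work is to recognize it as an \emph{actual element of $G$}, which is exactly where the ring structure of $G$ and the field property of $G/T$ (via the global element $\beta$) are needed. The bookkeeping with the index $k$ and the disjoint-support cancellations is routine, but it must be arranged so that the finitely many ``bad'' coordinates are all absorbed into $B_{k}$.
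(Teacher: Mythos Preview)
Your proposal is correct and follows exactly the route the paper indicates: the paper leaves the proof to the reader ``based upon the techniques of the above example'' (Example~\ref{ex1}), and your two-case analysis on whether $\bar\alpha\in G/T$ is zero or a unit, together with the decomposition $G=B_k\oplus G_k$ and the finiteness of $B_k$, is precisely that template carried over from $G/T\cong\Q$ to an arbitrary field. One small point worth tightening when you write it out: the claim that $\phi|_T$ is multiplication by $\alpha=\phi(\mathbf 1)$ on each cyclic summand is not immediate from diagonality alone; the clean way is to first let $\gamma=(c_i)$ be the diagonal multiplier on $T$, run your divisibility/reduced argument with $\phi-\mu_\gamma:G\to P$ to get $\phi=\mu_\gamma|_G$, and then read off $\gamma=\phi(\mathbf 1)=\alpha$.
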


One notes that in Example~\ref{ex1}, we are in the case of Proposition~\ref{uniformly11}, where all primes are used, $e_i=i$ for each $i\in \N$ and $G/T\cong \Q$.

\medskip

Now, one remembers that the listed above Theorem~\ref{3.1} claims that, if $G=\prod_p T_p$, where each $T_p$ is a finite $p$-group (so that $G$ is an sp-group), then $G$ is Sco-H, provided $T$ is Sco-H. Besides, Theorem~\ref{char} asserts that this is also valid when $G/T$ is divisible of finite rank. The following example, however, manifestly states that this does {\it not} hold in general.

\begin{example}\label{ex2} There is an sp-group $G$ such that $T$ is Sco-H, but $G$ is \textit{not} Sco-H (so that $G/T$ is also {\it not} Sco-H, i.e., it has infinite rank).
\end{example}

\begin{proof} In \cite[Example 20]{CK}, a reduced sp-group $G$ is constructed that is even {\it not} co-Hopfian (and hence {\it not} Sco-H) such that $T\cong \bigoplus_{n\in \N} \Z(p_n)$ for an infinite collection of distinct primes $p_n$. Apparently, this $T$ is Sco-H, assuring our claim.
\end{proof}

A natural question which arises is of whether this example can be improved to sp-groups of finite torsion-free rank?

\medskip

If $G$ is an sp-group that possesses torsion-free rank 1 such that each $T_p$ is finite, then it might be conjectured that $G$ is always Sco-H no matter whether $T$ is Sco-H or not. However, the following exhibition shows that this fails.

\begin{example}\label{ex3} There is an sp-group $G$ such that each $T_p$ is finite, $G/T$ has rank 1 (and so it is Sco-H), but neither $T$ nor $G$ is Sco-H.
\end{example}

\begin{proof} Let $\mathcal P \cup \mathcal Q$ be a partition of the collection of all primes into two infinite, disjoint subsets. Put
$$
            T_{\mathcal P}:= \bigoplus_{p\in\mathcal P} \Z(p)\leq H \leq \prod_{p\in\mathcal P} \Z(p)
$$
such that $H/T_{\mathcal P}\cong \Q$. Moreover, if $\mathcal Q=\{q_1, q_2,\dots\}$ is a listing of $\mathcal Q$, set
$$
            T_{\mathcal Q}:= \bigoplus_{n\in \N} \Z(q_n^n);
$$
so, $T_{\mathcal Q}$ is a torsion group that is surely not Sco-H in conjunction with \cite[Theorems~2.1, 2.2]{A}. 

If now, we define $G:= H\oplus T_{\mathcal Q}$, then $G$ is, immediately, an sp-group with $G/T$ of rank 1 and each $T_p$ finite, but since $T_{\mathcal Q}$ is a direct summand of both $G$ and $T$, neither one of them is Sco-H, as asked.
\end{proof}

These examples prove that in answering the question of when a reduced sp-group is actually an Sco-H group, we need to consider more than simply its torsion subgroup and corresponding torsion-free quotient individually. It is equally important, and quite complicated, to further consider the ways in which these two parts can be combined into a properly mixed group.

\medskip

We end our work with the following challenging question left unresolved by our discussions so far (compare with Proposition~\ref{splitinvariant} stated above).

\medskip

\noindent{\bf Problem.} If $A$ and $B$ are Sco-H groups, does it follows that $A\oplus B$ is also Sco-H? In particular, is the square $G\oplus G$ an Sco-H group whenever $G$ is an Sco-H group?

\medskip

It is worthy of noting that this question makes {\it no} sense for infinite direct sums as, for instance, Example~\ref{ex3} unambiguously illustrates: Indeed, every $p$-component $T_p$ is finite and thus Sco-H, but $T=\oplus_p T_p$ is {\it not} so.

\medskip

Note that in the above problem there is no loss of generality in assuming that $A$ and $B$ are both reduced sp-groups.

\bigskip

\noindent{\bf Declarations:} The authors declare {\bf not} any conflict of interests as well as {\bf no} data was used while preparing and writing the current manuscript.

\bigskip

\noindent{\bf Funding:} The work of A.R. Chekhlov is supported by the Ministry of Science and Higher
Education of Russia under agreement No. 075-02-2025-1728/2.

\vskip3.0pc

\end{document}